\theoremstyle{definition}
\newtheorem{theorem}{Theorem}[section]
\newtheorem*{theorem*}{Theorem}
\newtheorem{proposition}[theorem]{Proposition}
\newtheorem{defn}[theorem]{Definition}
\newtheorem{example*}[theorem]{Example*}
\newtheorem{examples*}[theorem]{Examples*}
\newtheorem{remark}[theorem]{Remark}
\newtheorem{remark*}[theorem]{Remark*}
\pgfplotsset{compat=1.17}
\newcommand{\tikzfig}[1]{%
\IfFileExists{#1.tikz}
  {\input{#1.tikz}}
  {%
    \IfFileExists{./figures/#1.tikz}
      {\input{./figures/#1.tikz}}
      {\tikz[baseline=-0.5em]{\node[draw=red,font=\color{red},fill=red!10!white] {\textit{#1}};}}%
  }%
}
\tikzstyle{none}=[inner sep=0mm]
\tikzstyle{every loop}=[]
\tikzstyle{mark coordinate}=[inner sep=0pt,outer sep=0pt,minimum size=3pt,fill=black,circle]
\tikzstyle{smalldotb}=[fill=black, inner sep=0mm,minimum width=1mm,minimum height=1mm,draw,shape=circle]
\tikzstyle{H}=[-, style=dashed]
\def\bR{\begin{color}{red}}  
\def\bB{\begin{color}{blue}}
\def\bM{\begin{color}{magenta}}  
\def\bC{\begin{color}{cyan}}
\def\bW{\begin{color}{white}}
\def\bBl{\begin{color}{black}}
\def\bG{\begin{color}{green}}
\def\bY{\begin{color}{yellow}}
\def\e{\end{color}}
\title{Constructor Theory as Process Theory}
\author{
    Stefano Gogioso
    \institute{Hashberg Ltd}
    \institute{University of Oxford}
    \email{stefano.gogioso@cs.ox.ac.uk} 
    \and
    Vincent Wang-Ma\'{s}cianica
    \institute{Quantinuum Ltd}
    \institute{University of Oxford}
    \email{vincent.wang@cambridgequantum.com}
    \and
    Muhammad Hamza Waseem
    \institute{Quantinuum Ltd}
    \institute{University of Oxford}
    \email{hamza.waseem@physics.ox.ac.uk}. 
    \and
    Carlo Maria Scandolo
    \institute{University of Calgary}
    \email{carlomaria.scandolo@ucalgary.ca} 
    \and
    Bob Coecke
    \institute{Quantinuum Ltd}
    \email{bob.coecke@quantinuum.com}
}
\begin{document}

\maketitle

\begin{abstract}
    Constructor theory is a meta-theoretic approach that seeks to characterise concrete theories of physics in terms of the (im)possibility to implement certain abstract ``tasks'' by means of physical processes. Process theory, on the other hand, pursues analogous characterisation goals in terms of the compositional structure of said processes, concretely presented through the lens of (symmetric monoidal) category theory. In this work, we show how to formulate fundamental notions of constructor theory within the canvas of process theory. Specifically, we exploit the functorial interplay between the symmetric monoidal structure of the category of sets and relations, where the abstract tasks live, and that of symmetric monoidal categories from physics, where concrete processes can be found to implement said tasks. Through this, we answer the question of how constructor theory relates to the broader body of process-theoretic literature, and provide the impetus for future collaborative work between the fields.
\end{abstract}

\section{Introduction}
Constructor theory \cite{deutsch2013constructor,deutsch2015constructor,marletto_science_2021} is a metatheoretic approach that seeks to characterise concrete theories of physics and information in terms of the \emph{possibility} and \emph{impossibility} of \emph{tasks}, which are \emph{transformations} between \emph{systems}.
Transformations may require auxiliary inputs other than the system to be transformed: the task of turning black shoes into white shoes may require a stock of white paint as an auxiliary input in addition to the black shoes themselves.
Tasks transform \emph{states} of systems into other states, and \emph{attributes} of systems---such as the blackness of a shoe---into other attributes.
In our universe, we will eventually run out of white paint for this task, but if we had a mathematically ideal paintbrush with infinite white paint, we could reuse it for as many instances of the task as we'd like; such non-exhaustible auxiliary catalysts for tasks are called \emph{constructors}.
A task is \emph{possible} when it is partnered with a constructor that allows the task to be performed arbitrarily many times, and the task is \emph{impossible} otherwise.
Though constructors and tasks are abstract, they provide explanatory value; constructor theory seeks to characterise physical theories in terms of what tasks are possible.
As a metatheory, constructor theory is implementation-agnostic, and one can choose whatever formal system of mathematics they like as a concrete language to interpret the italic terms above.

Process theories provide one such mathematically formal language, one particularly well-suited to describe the composition of processes in space-time.
Moreover, process theories are expressed in terms of string diagrams, which are an aesthetic, intuitive, flexible, and rigorous metalinguistic syntax, empowering the modeller by allowing them to operate at a level of abstraction of their choice.
This means that the same abstract diagrams provide a common syntactic foundation for fields as disparate as linear and affine algebra \cite{bonchi_interacting_2017,bonchi_graphical_2019}, first order logic \cite{haydon_compositional_2020}, electrical circuits \cite{boisseau_string_2022}, digital circuits \cite{ghica_categorical_2016}, database operations \cite{hefford_categories_2020,wilson_safari_2021}, spatial relations \cite{wang-mascianica_talking_2021}, game theory \cite{hedges_string_2015}, petri nets \cite{baez_open_2020}, hypergraphs \cite{alvarez-picallo_rewriting_2022}, probability theory \cite{cho_disintegration_2019,fritz_finettis_2021}, causal reasoning \cite{jacobs_causal_2019}, machine learning \cite{cruttwell_categorical_2022}, and quantum theory \cite{coecke_picturing_2017,coecke_quantum_2023}, to name just a few.

In this short paper, we provide a formal interpretation of constructor-theoretic terminology and ideas within the string-diagrammatic setting of process theories, with the intent to build a bridge between the two communities.
We caution against the view that constructor theory is ``just'' a class of process theories, in the same sense as it would be misguided to claim that prime numbers are ``just'' integers.
Process theory merely provides a rigorous mathematical language for constructor theorists to tell their stories.

For the process theorists in our audience, we wish to stress that the pedagogical mathematical presentation of this paper is for the sake of constructor theorists who might be approaching our field for the first time.
Regardless, we offer you a Rosetta stone for constructor theory within what we understand to be the \emph{de rigueur} mathematics of the field, transliterated into diagrams with as few embellishments and interpretational choices as possible.
For the constructor theorists in our audience, we extend a warm invitation to join the process-theoretic community: to the best of our knowledge, this is the most attractive and general formal arena available within which to explore the ramifications of constructor theory.

\newcommand{\textdef}[1]{\textbf{#1}}
\newcommand{\obj}[1]{\ensuremath{\text{obj}\!\left(#1\right)}}
\newcommand{\states}[2]{\ensuremath{\text{states}_{#1}\!\left(#2\right)}}
\newcommand{\id}[1]{\ensuremath{\text{id}_{#1}}}
\newcommand{\substr}[1]{\ensuremath{\texttt{#1}}}
\newcommand{\smc}[1]{\ensuremath{\textbf{#1}}}
\newcommand{\Set}{\smc{Set}}
\newcommand{\Rel}{\smc{Rel}}
\newcommand{\EndoRel}{\smc{EndoRel}}
\newcommand{\tmapsto}[1]{\ensuremath{\stackrel{\task{#1}}{\mapsto}}}
\newcommand{\suchthat}[2]{\ensuremath{\left\{#1\;\middle|\;#2\right\}}}
\newcommand{\task}[1]{\ensuremath{\mathfrak{#1}}}
\newcommand{\precond}[2]{\ensuremath{{#1}|_{#2}}}
\newcommand{\postcond}[2]{\ensuremath{{#1}|^{#2}}}
\newcommand{\ppcond}[3]{\ensuremath{{#1}|_{#2}^{#3}}}
\newcommand{\indtask}[1]{\ensuremath{\left\lfloor #1 \right\rfloor}}
\newcommand{\possibletasks}[1]{\ensuremath{#1^\checkmark}}
\newcommand{\quotask}[3]{#1|_{#2}^{#3}}

\section{Conceivable Tasks}

Constructor theory is concerned with the study of physical theories in terms of the question ``which \emph{tasks} are performable within this physical theory?": there is an abstract notion of \emph{conceivable} tasks and a concrete notion of \emph{possible} tasks.
In seminal work by Deutsch \cite{deutsch2013constructor}, it was remarked that, in full generality, the only real requirement on conceivable tasks is arbitrary composability in sequence and in parallel, i.e. that they form a symmetric monoidal category (SMC).
\footnote{It is possible that Deutsch meant for substrates to have an individual identity as physical systems, rather than just a ``type'': that is, it is possible that Deutsch would prefer for ``this qubit'' and ``that qubit'' to be modelled by different---albeit isomorphic---objects in a process theory. In this case, it would make no sense to consider parallel compositions of tasks involving the ``same'' physical system, and partially-monoidal categories as defined in \cite{gogioso2019church} would be preferable as a process-theoretical universe.}
Back then, however, the same author made a specific choice to model tasks as relations between sets: constructor theory literature has stuck by this choice ever since, and so will we.

\begin{remark}
    In this work, we take all monoidal categories to be \emph{strict}, and in particular we assume that objects $\obj{\smc{D}}$ in a monoidal category $\smc{D}$ form a strict monoid. In the case of the SMCs $\Rel$ and $\Set$, considered in Definition 2.2 below, this implies a choice of singleton set $1 := \{*\}$ to act as a strict unit for the Cartesian product:
    \[
        X \times 1 = X = 1 \times X
    \]
    This also affords us the freedom to write triples (and other tuples) without having to care about nesting:
    \[
        X \times Y \times Z
        = \suchthat{(x,y,z)}{x \in X, y \in Y, z \in Z}
    \]
    Note that strictness does not extend to symmetry isomorphisms: we have that $X \times Y \cong Y \times X$, but this doesn't mean that $X \times Y = Y \times X$. As a consequence, the monoid formed by objects in a strict SMC is not generally commutative.
\end{remark}

We take the theory of \textit{conceivable tasks} to be $\Rel$, the $\dagger$-SMC of sets and relations. We write $\task{A}: X \rightarrow Y$ for a task/relation $\task{A} \subseteq X \times Y$, where the set $X$ labels legitimate input states for the task and the set $Y$ labels legitimate output states. To help distinguish between pairs/tuples of elements in a Cartesian product and pairs of domain/codomain elements in a relation, we reserve pair/tuple notation for the former and adopt \emph{maplet} notation for the latter:
\[
    x \mapsto y \;\;:\equiv\;\; (x,y)
    \hspace{2.5cm}
    x \tmapsto{A} y \;\;:\equiv\;\; (x,y) \in \task{A}
\]
We omit $\task{A}$ from $\stackrel{\task{A}}{\mapsto}$ when clear from context. The \textit{sequential composition} $\task{B} \circ \task{A}: X \rightarrow Z$ of task $\task{B}: Y \rightarrow Z$ after $\task{A}: X \rightarrow Y$ is defined as follows:
\[
    \task{B} \circ \task{A}
    := \suchthat{x\mapsto z}{\exists y \in Y.\, x \stackrel{\task{A}}{\mapsto} y \text{ and } y \stackrel{\task{B}}{\mapsto} z}
\]
Sequential composition in diagrammatic language:
\[
    \tikzfig{seqcomp}
\]
Composite sets of states are obtained by Cartesian product $X \times Y$:
\[
    X \times Y
    := \suchthat{(x,y)}{x \in X \text{ and }y \in Y}
\]
The \textdef{parallel composition} $\task{A} \times \task{B}: X \times Z \rightarrow Y \times W$ of tasks $\task{A}: X \rightarrow Y$ and $\task{B}: Z \rightarrow W$ is defined as follows:
\[
    \task{A} \times \task{B}
    := \suchthat{ (x,z) \mapsto (y,w) }{ x \tmapsto{A} y \text{ and } z \tmapsto{B} w }
\]
Parallel composition in diagrammatic language:
\[
    \tikzfig{parcomp}
\]
The \textdef{transpose} $\task{A}^\dagger: Y \rightarrow X$ of a task $\task{A}: X \rightarrow Y$ is defined as follows:
\[
    \task{A}^\dagger
    := \suchthat{ y \mapsto x}{x \tmapsto{A} y }
\]
Transposition in diagrammatic language:
\[
    \tikzfig{transpose}
\]
Finally, there are \textdef{symmetry isomorphisms} (aka \textdef{swaps}) $\sigma_{X,Y}: X \times Y \stackrel{\cong}{\rightarrow} Y \times X$:
\[
    \sigma_{X,Y} := \suchthat{ (x,y) \mapsto (y,x) }{ x \in X \text{ and } y \in Y }
\]
Symmetry isomorphisms in diagrammatic language:
\[
    \tikzfig{syms}
\]
The symmetry isomorphisms are a structural feature of the category, making it possible to compose relations into acyclic networks, where outputs of relations can be connected to inputs of other relations. This is made possible by the following properties of the symmetry isomorphisms:

\[
    \tikzfig{symeqs}
\]

\begin{remark}
    The sets $X$ and $Y$ are allowed to be distinct, for sake of generality. Asking that they are always equal is equivalent to restricting the theory of conceivable tasks to be the $\dagger$-SMC $\EndoRel$ of sets and endo-relations $R: X \rightarrow X$, which is a sub-$\dagger$-SMC of $\Rel$.
\end{remark}

If we restrict our attention to the \emph{total deterministic} relations in $\Rel$, we obtain the sub-SMC $\Set$ of sets and functions between them. Functions are closed under acyclic network composition (sequential and parallel, including the usage of symmetry isomorphisms), but not under transpose. Important examples of functions are the \textdef{copy map} $\delta_X: X \rightarrow X \times X$ and \textdef{discarding map} $\epsilon_X: X \mapsto 1$ on a set $X$:
\begin{align*}
    \delta_X &:= \suchthat{x \mapsto (x,x)}{ x \in X}\\
    \epsilon_X &:= \suchthat{x \mapsto *}{ x \in X}
\end{align*}
Copy and delete maps in diagrammatic language:
\[
    \tikzfig{copydel}
\]
Copies are indistinguishable under swaps and repeated copies, and deleting a copy results in the identity:
\[
    \scalebox{0.8}{$
    \tikzfig{copydeleqs}
    $}
\]
The transposes of the copy and discarding map are not functions. The transpose $\delta_X^\dagger: X \times X \rightarrow X$ is the \textdef{match map}, a partial function which returns the common value of its inputs when they're equal and is otherwise undefined:
\[
    \delta_X^\dagger := \suchthat{(x,x) \mapsto x}{ x \in X}
\]
Match map in diagrammatic language:
\[
    \tikzfig{match}
\]
Relations $S: 1 \rightarrow X$, such as the transpose $\epsilon_X^\dagger: 1 \rightarrow X$ of the discarding map, can be identified with all possible \textdef{attributes} of states in $X$, i.e. with all possible subsets $S \subseteq X$:
\[
    S \cong \suchthat{* \mapsto x}{ x \in S}
\]
States and attributes have the same notation in diagrammatic language, since states $x \in X$ can be identified with singleton subsets $\{x\} \subseteq X$:
\[
    \tikzfig{attr}
\]
The transpose $\epsilon_X^\dagger: 1 \rightarrow X$ of the discarding map is the \textdef{trivial attribute}, corresponding to subset $X \subseteq X$:
\[
    \eta_X := \epsilon_X^\dagger = \suchthat{* \mapsto x}{ x \in X}
\]
Trivial attribute in diagrammatic language:
\[
    \tikzfig{unit}
\]
Attributes can be used to condition tasks to specific input states.
\begin{defn}
    Let $\task{A}: X \times Z \rightarrow Y$ be a task and let $S \subseteq Z$ be an attribute on states in $Z$. The \textdef{pre-conditioned task} is defined to be the task obtained by forgetting all information about the $Z$ input of $\task{A}$ other than the fact that the input state has attribute $S$:
    \[    
        \substack{
            \tikzfig{preconditioned}\\
            \task{A} \circ (\id{X} \times S)
        }
        = \suchthat{x \mapsto y}{\exists z \in S.\, (x,z) \tmapsto{A} y}
    \]
    As a special case, we can discard the $Z$ input entirely, by pre-conditioning against the trivial attribute $\eta_Z$:
    \[
        \substack{
            \tikzfig{prediscard}\\
            \task{A} \circ (\id{X} \times \eta_Z)
        }
        = \suchthat{x \mapsto y}{\exists z \in Z.\, (x,z) \tmapsto{A} y}
    \]
\end{defn}
The object $1$ is \emph{terminal} in $\Set$: there is a unique function $\epsilon_X: X \rightarrow 1$ for any set $X$. However, it is not terminal in $\Rel$: the relations $X \rightarrow 1$ are exactly the transposes $S^\dagger: X \rightarrow 1$ of the attributes $S: 1 \rightarrow X$. Explicitly, they are the constant partial functions with the attribute $S$ as their domain:
\[
    S^\dagger := \suchthat{ x \mapsto * }{ x \in S }
\]
The transposes of attributes are \textdef{tests}, which can be used to condition tasks to specific output states.
\begin{defn}
    Let $\task{A}: X \rightarrow Y \times Z$ be a task and let $S \subseteq Z$ be an attribute on states in $Z$. The \textdef{post-conditioned task} is defined to be the task obtained by forgetting all information about the $Z$ output of $\task{A}$ other than the fact that the output state has attribute $S$:
    \[
        \substack{
            \tikzfig{postconditioned}\\
            (\id{X} \times S^\dagger) \circ \task{A}
        }
        = \suchthat{x \mapsto y}{\exists z \in S.\, x \tmapsto{A} (y, z)}
    \]
    As a special case, we can discard the $Z$ output of the task entirely, by post-conditioning against the trivial attribute on $Z$:
    \[
        \substack{
            \tikzfig{postdiscard}\\
            (\id{X} \times \epsilon_Z) \circ \task{A}
        }
        = \suchthat{x \mapsto y}{\exists z \in Z.\, x \tmapsto{A} (y, z)}
    \]
\end{defn}
\begin{remark}
    We can simultaneously pre-condition a task $\task{A}: X \times Z \rightarrow Y \times W$ against an attribute $P \subseteq Z$ and post-condition it against an attribute $Q \subseteq W$:
    \[
        \substack{
            \tikzfig{prepost}\\
            (\id{X} \times Q^\dagger) \circ \task{A} \circ (\id{X} \times P)
        }
        = \suchthat{x \mapsto y}{\exists p \in P, q \in Q.\, (x, p) \tmapsto{A} (y, q)}
    \]
\end{remark}

\section{Possible Tasks}

Conceivable tasks are a theory-independent concept: they provide a formal universe within which to formulate principles and derive constraints. On the other hand, possible tasks are theory-dependent, induced by the constructors physically available to implement them. In order to determine which tasks are possible, we need to make a choice of substrates within a theory of processes.

\begin{defn}
    A \textdef{choice of substrates} $\left( \smc{C}, \Sigma, \Gamma \right)$ comprises:
    \begin{enumerate}
        \item  A reference \textdef{theory of processes}, in the form of a strict SMC $\smc{C} = \left(\obj{\smc{C}}, \otimes, I\right)$. For example, this could be the theory of finite-dimensional quantum systems and unitary transformations.
        \item A choice of \textdef{substrates}, in the form of a subset $\Sigma \subseteq \obj{\smc{C}}$ of systems in the theory of processes. 
        \item A choice of \textdef{sets of substrate states}, in the form of a family $\Gamma = \left( \Gamma_{\substr{H}} \right)_{\substr{H} \in \Sigma}$ where $\Gamma_\substr{H} \subseteq \states{\smc{C}}{\substr{H}}$ is a set of states in $\smc{C}$ for each substrate $\substr{H} \in \Sigma$.
    \end{enumerate}
    We require that the choice of substrates be closed under parallel composition: $I \in \Sigma$ and $\substr{H} \otimes \substr{K} \in \Sigma$ for all $\substr{H}, \substr{K} \in \Sigma$. We further require that the set of substrate states respects parallel composition of substrates: $\Gamma_I = 1$ and $\Gamma_{\substr{H}\otimes\substr{K}} = \Gamma_\substr{H} \times \Gamma_\substr{K}$ for all $\substr{H}, \substr{K} \in \Sigma$.
\end{defn}

Given two substrates $\substr{H}, \substr{K} \in \Sigma$, we consider tasks $\Gamma_\substr{H} \rightarrow \Gamma_\substr{K}$ and ask which ones are \emph{possible} within the given theory of processes: in short, a task is possible when there is a \emph{constructor} which acting as a catalyst enables the task to be performed. Expanding on this, we come to the following definitions.

\begin{defn}
        Let $\left( \smc{C}, \Sigma, \Gamma \right)$ be a choice of substrates and consider two substrates $\substr{H}, \substr{K} \in \Sigma$. A process $f: \substr{H} \rightarrow \substr{K}$ is \textdef{task-inducing} if it maps states in $\Gamma_\substr{H}$ to states in $\Gamma_\substr{K}$:
        \[
            \forall \rho \in \Gamma_\substr{H}.\;
            f(\rho) \in \Gamma_\substr{K}
        \]
        We write $\indtask{f}$ for the task \textdef{induced by $f$}:
        \[
            \indtask{f} := \suchthat{\rho \mapsto f(\rho)}{\rho \in \Gamma_\substr{H}}
        \]
\end{defn}

\begin{defn}
    Let $\left( \smc{C}, \Sigma, \Gamma \right)$ be a choice of substrates and consider a task $\task{A}: \Gamma_\substr{H} \rightarrow \Gamma_\substr{K}$. We say that $\task{A}$ is \textdef{possible} if there are:
    \begin{itemize}
        \item[(i)] a substrate \substr{C} (acting as a \textdef{constructor} for the task)
        \item[(ii)] an attribute $P \subseteq \Gamma_\substr{C}$ (singling out the relevant constructor states)
        \item[(iii)] a task-inducing process $f: \substr{H} \otimes \substr{C} \rightarrow \substr{K} \otimes \substr{C}$ (actually performing the task)
    \end{itemize}
    such that the following two conditions are satisfied:
    \begin{enumerate}
        \item Task $\task{A}$ is obtained from the induced task $\indtask{f}$ by requiring that the input constructor state has attribute $P$ and discarding the constructor output:
        \begin{align*}
            \task{A} &=
            \substack{
               \tikzfig{def28}\\
                (\id{\Gamma_\substr{K}} \times \epsilon_{\Gamma_{\substr{C}}})
                \circ \indtask{f}
                \circ (\id{\Gamma_\substr{H}} \times P)
            }\\
            &= \suchthat{
                \rho \mapsto \rho'
            }{
                \exists \gamma \in P, \gamma' \in \Gamma_\substr{C}.\,
                f(\rho \otimes \gamma) = \rho' \otimes \gamma'
            }
        \end{align*}
        \item The attribute $P$ is preserved by the induced task $\indtask{f}$. While a particular constructor state $\gamma \in P$ may be modified to become $\gamma'$ by the underlying process of the induced task $\indtask{f}$, $\gamma'$ remains a constructor state for the same induced task $\indtask{f}$, i.e. $\gamma' \in P$. In $\Rel$, this constraint is equivalently expressed as the induced task $\indtask{f}$ sending the set of constructors $P$ to a subset of itself, regardless of the input and output on the substrates $\substr{H},\substr{K}$:
        \[
            \substack{
                \tikzfig{def282}\\
                (\epsilon_{\Gamma_\substr{K}} \times \id{\Gamma_{\substr{C}}})
                \circ \indtask{f}
                \circ (\eta_{\Gamma_\substr{H}} \times P) \subseteq P
            }
        \]
    \end{enumerate}
    We write $\possibletasks{\left( \smc{C}, \Sigma, \Gamma \right)}$ for the set of possible tasks under the given choice of substrates.
\end{defn}

The main result of this section is that possible tasks for a choice of substrate form a sub-SMC of $\Rel$, i.e. that they are closed under composition in arbitrary (acyclic) networks.

\begin{proposition}
The possible tasks $\possibletasks{\left( \smc{C}, \Sigma, \Gamma \right)}$ for a given choice of substrates form a sub-SMC of $\Rel$.
\end{proposition}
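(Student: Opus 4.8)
The plan is to exhibit $\possibletasks{\left( \smc{C}, \Sigma, \Gamma \right)}$ as a sub-SMC of $\Rel$ whose objects are the substrate state sets $\Gamma_\substr{H}$ for $\substr{H} \in \Sigma$. Since all the monoidal categories in play are strict, the associators and unitors are identities, so it suffices to check four things: that this class of objects is closed under $\times$ and contains the monoidal unit $1$; that the identity $\id{\Gamma_\substr{H}}$ and the symmetry $\sigma_{\Gamma_\substr{H},\Gamma_\substr{K}}$ are possible tasks; that possible tasks are closed under sequential composition $\circ$; and that they are closed under parallel composition $\times$. The first point is immediate from the closure requirements on a choice of substrates, namely $I \in \Sigma$ with $\Gamma_I = 1$, and $\substr{H}\otimes\substr{K} \in \Sigma$ with $\Gamma_{\substr{H}\otimes\substr{K}} = \Gamma_\substr{H}\times\Gamma_\substr{K}$.

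For the identities and symmetries I would use the trivial constructor $\substr{C} := I$, the attribute $P := \Gamma_I = 1$, and the witnessing process $f := \id{\substr{H}}$ in $\smc{C}$ (respectively $f := \sigma_{\substr{H},\substr{K}}$, which is task-inducing precisely because $\Gamma$ respects $\otimes$). The pre- and post-conditioning against $1$ is vacuous, so the induced possible task is exactly $\id{\Gamma_\substr{H}}$ (resp. the swap $\sigma_{\Gamma_\substr{H},\Gamma_\substr{K}}$), and condition (2) of the definition of a possible task holds trivially because $P = 1$.

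The substance of the argument is closure under composition, where the plan is to ``run the two constructors side by side''. Given $\task{A}$ witnessed by $(\substr{C}_A, P_A, f_A \colon \substr{H}\otimes\substr{C}_A \to \substr{K}\otimes\substr{C}_A)$ and $\task{B}$ witnessed by $(\substr{C}_B, P_B, f_B)$, I would take the composite constructor $\substr{C} := \substr{C}_A \otimes \substr{C}_B$, the composite attribute $P := P_A \times P_B \subseteq \Gamma_{\substr{C}_A}\times\Gamma_{\substr{C}_B} = \Gamma_\substr{C}$, and a witnessing process $f$ built from $f_A$ and $f_B$ by routing wires with the symmetry isomorphisms of $\smc{C}$: for $\task{B}\circ\task{A}$, apply $f_A$ to $\substr{H}\otimes\substr{C}_A$ and then feed the resulting $\substr{K}$-wire together with $\substr{C}_B$ into $f_B$ (carrying $\substr{C}_A$ along untouched); for $\task{A}\times\task{B}$ with $\task{A}\colon\Gamma_\substr{H}\to\Gamma_\substr{K}$ and $\task{B}\colon\Gamma_\substr{L}\to\Gamma_\substr{M}$, regroup the wires as $(\substr{H}\otimes\substr{C}_A)\otimes(\substr{L}\otimes\substr{C}_B)$ and apply $f_A\otimes f_B$. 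I would then verify three things in turn: that $f$ is task-inducing, because $f_A$ and $f_B$ are and $\Gamma$ respects $\otimes$; that unwinding the pre/post-conditioning via the set-builder formulas of the definition returns exactly $\task{B}\circ\task{A}$ (resp. $\task{A}\times\task{B}$), the point being that the existential over $P = P_A\times P_B$ and the discarding of the composite constructor output both factor into their $\substr{C}_A$- and $\substr{C}_B$-parts, while in the sequential case the intermediate existential reconstitutes the ``$\exists y$'' in the definition of $\circ$; and that condition (2) holds for $(\substr{C},P,f)$, which follows because $P_A$ is preserved by $\indtask{f_A}$ and $P_B$ by $\indtask{f_B}$, so the output constructor state $(\gamma_A',\gamma_B')$ again lies in $P_A\times P_B$.

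The main obstacle will be bookkeeping rather than anything conceptual. The one place the argument is not a mechanical rewrite is the sequential case: invoking the hypotheses on $f_B$ (that it is task-inducing, and conditions (1) and (2), all of which are stated for inputs ranging over $\Gamma_\substr{K}$ and $P_B$) is legitimate precisely because $f_A$ is task-inducing, so the intermediate state fed into $f_B$ genuinely is a substrate state of $\substr{K}$ lying in $\Gamma_\substr{K}$; this needs to be noticed and recorded. The genuinely tedious part is tracking the wire permutations used to define $f$ from $f_A$ and $f_B$ -- transparent as string diagrams, fiddly in symbols -- but no surprises lurk there.
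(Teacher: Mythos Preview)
Your proposal is correct and follows essentially the same approach as the paper: trivial constructor $I$ for identities and swaps, and the product constructor $\substr{C}_A\otimes\substr{C}_B$ with attribute $P_A\times P_B$ and the evident wire-routed process for both sequential and parallel composition. If anything, you are more careful than the paper, which presents the constructions diagrammatically and leaves the verifications of task-inducingness and condition~(2) implicit, whereas you spell out the key point that the intermediate state in the sequential case lies in $\Gamma_\substr{K}$.
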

\begin{proof}
Write $\smc{C} = \left(\obj{\smc{C}}, \otimes, I\right)$.
The identity tasks and swap tasks for all systems are made possible by the identity and symmetry isomorphisms of $\smc{C}$, with trivial constructor $\substr{C} := I$:
\[
    \tikzfig{idtask}
    \hspace{2cm}
    \tikzfig{swaptask}
\]
The sequential composition $\task{B} \circ \task{A}$ of possible tasks $\task{A}$ and $\task{B}$, with constructors $C$ and $D$ respectively, is possible with constructor $C \otimes D$:
\footnote{This step of the proof becomes more complicated if constructors are forced to have individual identities (i.e. in a partially monoidal category) and the same constructor must be reused by task $\task{B}$ after being used by task $\task{A}$. We leave the handling of this more sophisticated process-theoretic interpretation of constructor theory to future work.}
\[\tikzfig{seqtaskcomp}\]
The parallel composition $\task{B} \times \task{A}$ of possible tasks $\task{A}$ and $\task{B}$, with constructors $C$ and $D$ respectively, is possible with constructor $C \otimes D$:
\footnote{This step of the proof becomes more complicated if constructors are forced to have individual identities and the same constructor must be simultaneously used by task $\task{B}$ and task $\task{A}$. We leave the handling of this more sophisticated process-theoretic interpretation of constructor theory to future work.}
\[\tikzfig{partaskcomp}\]
This completes our proof.
\end{proof}

\section{Attributes as states}

More modern perspectives in constructor theory argue that tasks should be defined on the attributes of a substrate, rather than on the underlying states.
This captures the idea that the abstract specification of (possible) tasks---the basis upon which constructor theorists judge other theories of physics---should be based on the observable ``macrostates'' of a physical system (attributes/subsets of a set), rather than on the unobserved ``microstates'' which constitute them (states/elements of a set).
In this section, we show how the attribute-based perspective can be derived from the state-based perspective, in a compositionally sound way, by performing a suitable coarse-graining.

To start with, we define a notion of ``coarse-graining'' for tasks, moving from tasks defined on states (the ``microstates'', to stick to the thermodynamical metaphor) to tasks defined on attributes (the ``macrostates'', using the same metaphor).
We allow for the attributes involved to have non-trivial overlap---that is, we don't ask for them to form a partition---but we disallow nesting $S \subset T$ of different attributes; formally, we require for the set of attributes involved to form an ``antichain'' in the inclusion order $\subseteq$.

\begin{defn}
    Let $X$ be a set. A set $\bar{X} \subseteq \mathcal{P}(X)$ of attributes on $X$ is an \textdef{antichain} if no two attributes are nested into each other:
    \[
        \forall S, T \in \bar{X}.\, S \subseteq T \Rightarrow S = T
    \]
\end{defn}

Having fixed a choice of attributes $\bar{X}$ on $X$ and $\bar{Y}$ on $Y$, any task $\task{A}: X \rightarrow Y$ induces a ``coarse-grained task'' on the sets of attributes, as follows: for attributes $S \in \bar{X}$ and $T \in \bar{Y}$, we say that $S \mapsto T$ in the coarse-grained task if whenever an input state $x \in X$ has attribute $S$, i.e. whenever $x \in S$, then at least one of the possible outputs states $\suchthat{y \in Y}{x \tmapsto{A} y}$ has attribute $T$, i.e. $\exists y \in T.\, x \tmapsto{A} y$.
Put it another way, $S \mapsto T$ in the coarse-grained task means that the output of task $\task{A}$ \emph{can have} attribute $T$ whenever the input \emph{has} attribute $S$.

\begin{defn}
    Let $\task{A}: X \rightarrow Y$ be a task.
    Let $\bar{X} \subseteq \mathcal{P}(X)$ and $\bar{Y} \subseteq \mathcal{P}(Y)$ be sets of attributes of $X$ and $Y$ respectively.
    Then the \textdef{coarse-grained task} $\quotask{\task{A}}{\bar{X}}{\bar{Y}}: \bar{X} \rightarrow \bar{Y}$ is defined as follows:
    \[
        \quotask{\task{A}}{\bar{X}}{\bar{Y}} :=
        \left\{
            S \mapsto T
        \;\middle|\;
            S \in \bar{X},\, T \in \bar{Y},\, S \subseteq \task{A}^\dagger \circ T
        \right\}
    \]
\end{defn}

We conclude this section with three results, piecing the coarse-graining story together.
Firstly, we prove that given any process theory of tasks---including, amongst many others, the theory of all conceivable tasks and all theories of possible tasks---the coarse-grainings of the tasks can themselves be arranged into a process theory.
This shows that tasks defined on attributes are just as compositionally sound as those defined on states.
Secondly, we remark how the original ordinary tasks, defined on states, can be compositionally embedded into the universe of coarse-grained tasks, proving that the latter are a sound generalisation of the former.
Finally, we remark that coarse-grained tasks can be embedded back into the universe of ordinary tasks, proving that ordinary tasks are as expressive as coarse-grained ones.

\begin{proposition}
    Let $\mathcal{C}$ be a sub-SMC of $\Rel$, i.e. a collection of systems and tasks closed under parallel and sequential composition.
    The following defines a SMC $\overline{\mathcal{C}}$, which we refer to as the theory of \textdef{coarse-grained tasks} associated to $\mathcal{C}$:
    \begin{itemize}
        \item objects are all possible antichains of attributes for all possible sets of states:
        \[
            \obj{\overline{\mathcal{C}}} :=
            \bigcup_{X \in \obj{\mathcal{C}}} \left\{
                \bar{X} \subseteq \mathcal{P}(X)
            \;\middle|\;
                \bar{X} \text{ antichain}
            \right\}
        \]
        \item morphisms $\bar{X} \rightarrow \bar{Y}$ in $\bar{\mathcal{C}}$ are coarse-grained tasks corresponding to tasks $X \rightarrow Y$:
        \[
            \overline{\mathcal{C}}\left(\bar{X}, \bar{Y}\right)
            := \left\{
                \quotask{\task{A}}{\bar{X}}{\bar{Y}}
            \;\middle|\;
                \task{A}: X \stackrel{\mathcal{C}}{\longrightarrow} Y
            \right\}
        \]
        \item sequential composition $\circ$ is inherited from $\mathcal{C}$
        \item parallel composition $\boxtimes$ on objects is defined as:
        \[
            \bar{X} \boxtimes \bar{Y} := \suchthat{S \times T}{S \in \bar{X},\, T \in \bar{Y}}
        \]
        \item parallel composition $\boxtimes$ on morphisms arises by coarse-graining from that of $\mathcal{C}$:
        \[
            \quotask{\task{A}}{\bar{X}}{\bar{Y}}
            \boxtimes \quotask{\task{B}}{\bar{Z}}{\bar{W}}
            := \quotask{\left(\task{A} \times \task{B}\right)}{\bar{X}\boxtimes\bar{Z}}{\bar{Y}\boxtimes\bar{W}}
        \]
        \item identity and symmetry isomorphisms arise by coarse-graining from those of $\mathcal{C}$:
        \[
            \quotask{\id{X}}{\bar{X}}{\bar{X}} = \id{\bar{X}}
            \hspace{2cm}
            \quotask{\sigma_{X,Y}}{\bar{X}\boxtimes\bar{Y}}{\bar{Y}\boxtimes\bar{X}} = \sigma_{\bar{X}, \bar{Y}}
        \]
    \end{itemize}
    In particular, morphisms are well-defined, i.e. whenever $\bar{X} = \bar{X'}$ and $\bar{Y} = \bar{Y'}$ we have:
    \[
        \suchthat{\bar{\task{A}}}{\task{A}: X \rightarrow Y}
        = \suchthat{\bar{\task{A}}}{\task{A}: X' \rightarrow Y'}
    \]
\end{proposition}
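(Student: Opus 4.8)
The plan is to verify the symmetric-monoidal data in the order listed: locate the identities and symmetries, establish closure of the hom-sets under the two compositions, check the coherence and strictness conditions, and finally confirm that nothing depends on the carrier set behind an antichain. Throughout I would use the unwound form of the coarse-graining: $S \mapsto T$ lies in $\quotask{\task{A}}{\bar{X}}{\bar{Y}}$ exactly when every $x \in S$ admits some $y \in T$ with $x \tmapsto{A} y$ (this is what $S \subseteq \task{A}^\dagger \circ T$ unpacks to), a relation monotone in the sense that shrinking $S$ or enlarging $T$ can only add maplets. The first genuine use of the antichain hypothesis is in pinning down identities and symmetries: since $\id{X}^\dagger \circ T = T$, the condition for $S \mapsto T$ in $\quotask{\id{X}}{\bar{X}}{\bar{X}}$ is just $S \subseteq T$, which among members of an antichain forces $S = T$; hence $\quotask{\id{X}}{\bar{X}}{\bar{X}} = \id{\bar{X}}$, and the analogous computation — using also that $S \times T = S' \times T'$ with all parts nonempty forces $S = S'$, $T = T'$, the empty attribute being able to belong to an antichain only as its sole member — gives $\quotask{\sigma_{X,Y}}{\bar{X}\boxtimes\bar{Y}}{\bar{Y}\boxtimes\bar{X}} = \sigma_{\bar{X},\bar{Y}}$. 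In particular these candidate identities and symmetries really do lie in the claimed hom-sets.

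The heart of the argument, and the step I expect to be the main obstacle, is sequential composition. The naive hope is that coarse-graining is functorial, $\quotask{(\task{B}\circ\task{A})}{\bar{X}}{\bar{Z}} = \quotask{\task{B}}{\bar{Y}}{\bar{Z}} \circ \quotask{\task{A}}{\bar{X}}{\bar{Y}}$, but only the inclusion $\supseteq$ is clean: from $S \mapsto T$ and $T \mapsto U$ one chains witnesses $x \tmapsto{A} y \in T$ and $y \tmapsto{B} z \in U$ to obtain $x \stackrel{\task{B}\circ\task{A}}{\mapsto} z$ with $z \in U$. The reverse inclusion can genuinely fail, because the ``macrostate'' $T$ one would need — essentially a set of chosen intermediate witnesses — need not belong to the antichain $\bar{Y}$, so $\task{B}\circ\task{A}$ is in general the wrong representative. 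What I would do instead is take the relational composite $\quotask{\task{B}}{\bar{Y}}{\bar{Z}} \circ \quotask{\task{A}}{\bar{X}}{\bar{Y}}$ as the definition of composition in $\overline{\mathcal{C}}$ — the sense in which it is ``inherited from $\mathcal{C}$'', which itself sits inside $\Rel$ — and prove this composite is again of the form $\quotask{\task{D}}{\bar{X}}{\bar{Z}}$: the idea is to build $\task{D}$ from $\task{A}$, $\task{B}$ and the antichain $\bar{Y}$ by running $\task{A}$, filtering the intermediate state according to which members of $\bar{Y}$ it can be certified to inhabit, then running $\task{B}$, and to check both that this $\task{D}$ induces precisely the relational composite — here the antichain property of $\bar{Y}$ and $\bar{Z}$ is what excludes spurious certifications — and that $\task{D}$ remains inside $\mathcal{C}$. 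Making this construction precise and confirming it creates no unintended maplets is where I anticipate most of the work; associativity and the unit laws then come for free from $\Rel$.

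Parallel composition is considerably easier. On a product attribute $S \times S'$ the condition defining $\quotask{(\task{A}\times\task{B})}{\bar{X}\boxtimes\bar{Z}}{\bar{Y}\boxtimes\bar{W}}$ factors as the conjunction of the condition for $\quotask{\task{A}}{\bar{X}}{\bar{Y}}$ on $S$ and that for $\quotask{\task{B}}{\bar{Z}}{\bar{W}}$ on $S'$, since the two existential witnesses sit in disjoint coordinates and can be chosen independently; so $\boxtimes$ on morphisms depends only on the two coarse-grained factors and, under the bijection $\bar{X}\times\bar{Z} \cong \bar{X}\boxtimes\bar{Z}$ (again using unique factorisation of nonempty product attributes), agrees with the relational monoidal product. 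It then remains to observe that $\boxtimes$ makes $\obj{\overline{\mathcal{C}}}$ a strict monoid with unit the antichain $\{1\}$ — strictness descending from $\mathcal{C}$ together with $X \times 1 = X$ — that it is functorial on morphisms, and that the symmetries found above satisfy the symmetric-monoidal coherence conditions; all of this reduces, via the two displayed identities, to the corresponding facts in $\Rel$.

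Finally, for the well-definedness clause I would note that membership of $S \mapsto T$ in $\quotask{\task{A}}{\bar{X}}{\bar{Y}}$ depends on $\task{A}$ only through its restriction to $(\bigcup\bar{X}) \times (\bigcup\bar{Y})$, since $S \subseteq \bigcup\bar{X}$ and $T \subseteq \bigcup\bar{Y}$. If $\bar{X} = \bar{X'}$ and $\bar{Y} = \bar{Y'}$ as collections of subsets, then $\bigcup\bar{X} = \bigcup\bar{X'}$ and likewise on the target, so a task $\task{A}: X \to Y$ and the relation got by cutting it down to $(\bigcup\bar{X})\times(\bigcup\bar{Y})$ — now read inside $X' \times Y'$ — carry the very same coarse-graining; what remains is that this cut-down relation still lies in $\mathcal{C}$, which is automatic when $\mathcal{C} = \Rel$ and in general can be secured by choosing the $\task{D}$-type representatives of the second step to be supported on the relevant unions. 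Assembling the four steps delivers the SMC $\overline{\mathcal{C}}$ together with the stated equality of hom-sets.
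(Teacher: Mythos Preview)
Your handling of identities, symmetries, the parallel product, and the independence of hom-sets from the carrier set all follow the paper's route closely: the paper likewise reduces $\quotask{\id{X}}{\bar X}{\bar X}=\id{\bar X}$ and the symmetry equation to the antichain condition, and handles well-definedness by restricting a task to $\bigcup\bar X$ and $\bigcup\bar Y$ via the partial identities $\pi_{X'}$, $\pi_{Y'}$.

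The divergence is at sequential composition, and here you are \emph{more} careful than the paper rather than less. The paper's entire check is the implication you call ``the clean direction'': from $S\subseteq\task{A}^\dagger\circ T$ and $T\subseteq\task{B}^\dagger\circ U$ conclude $S\subseteq(\task{B}\circ\task{A})^\dagger\circ U$, i.e.\ $\quotask{\task{B}}{\bar Y}{\bar Z}\circ\quotask{\task{A}}{\bar X}{\bar Y}\subseteq\quotask{(\task{B}\circ\task{A})}{\bar X}{\bar Z}$. You are right that this alone neither makes composition-via-representatives well-defined (one can write down $\task{A},\task{A}'$ with equal coarse-grainings but $\quotask{(\task{B}\circ\task{A})}{\bar X}{\bar Z}\neq\quotask{(\task{B}\circ\task{A}')}{\bar X}{\bar Z}$), nor shows that the relational composite is again a coarse-graining of something in $\mathcal{C}$. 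Your proposed fix---take relational composition as the definition and manufacture a $\task{D}$ realising it---is the right instinct, but the sketch you give does not yet close the gap: the ``certification/filtering'' idea produces a relation depending on individual elements $x$, whereas the maplet $S\mapsto U$ in the relational composite is a condition on all of $S$ jointly, so your candidate $\task{D}$ may pick up spurious maplets; and the closure question $\task{D}\in\mathcal{C}$ for an arbitrary sub-SMC is, as you flag, genuinely open and not obviously securable by choosing representatives supported on the unions. So on this one point the paper's argument is thinner than yours, and yours in turn identifies but does not fully resolve the difficulty.
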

\begin{proof}
    Objects are clearly well-defined, but well-definition of morphisms requires proof.
    Let $X$ and $Y$ be sets, let $\bar{X} \subseteq \mathcal{P}(X)$ and $\bar{Y} \subseteq \mathcal{P}(Y)$ be antichains.
    It suffices to show the following for $X' := \bigcup \bar{X} \subseteq X$ and $Y' := \bigcup \bar{Y} \subseteq Y$:
    \[
            \left\{
                \quotask{\task{A}}{\bar{X}}{\bar{Y}}
            \;\middle|\;
                \task{A}: X \rightarrow Y
            \right\}
            = \left\{
                \quotask{\task{A}}{\bar{X}}{\bar{Y}}
            \;\middle|\;
                \task{A}: X' \rightarrow Y'
            \right\}
    \]
    Write $\pi_{X'}:= \{x \mapsto x | x \in X'\}: X \rightarrow X$ and $\pi_{Y'}:= \{y \mapsto y | y \in Y'\}: Y \rightarrow Y$.
    If $S \in \bar{X}$ and $T \in \bar{Y}$, then $S \subseteq X'$ and $T \subseteq Y'$, and hence:
    \[
        S \subseteq \task{A}^\dagger \circ T
        \;\Leftrightarrow\; S \subseteq \pi_{X'} \circ \task{A}^\dagger \circ T
        \;\Leftrightarrow\; S \subseteq \pi_{X'} \circ \task{A}^\dagger \circ \pi_{Y'} \circ T
    \]
    Observing that $\pi_{Y'} \circ \task{A} \circ \pi_{X'}$ is a task $X' \rightarrow Y'$ completes the proof that morphisms are well-defined.
    For identities, we want to show that $\quotask{\id{X}}{\bar{X}}{\bar{X}} = \id{\bar{X}}$, and this is exactly the definition of $\bar{X}$ being an antichain: 
    \[
        \quotask{\id{X}}{\bar{X}}{\bar{X}} = \id{\bar{X}}
        \Leftrightarrow \left[
            S \subseteq T \Rightarrow S = T
        \right]
    \]
    For symmetry isomorphisms, we want to show that $\quotask{\sigma_{X,Y}}{\bar{X}\times\bar{Y}}{\bar{X}\times\bar{Y}} = \sigma_{\bar{X}, \bar{Y}}$, and this again follows from the antichain requirement:
    \begin{align*}
        \quotask{\sigma_{X,Y}}{\bar{X}\boxtimes\bar{Y}}{\bar{X}\boxtimes\bar{Y}} = \sigma_{\bar{X}, \bar{Y}}
        &\Leftrightarrow\left[
            \left(S \times T \subseteq \sigma_{X,Y}^\dagger \circ (T' \times S')\right)
            \Rightarrow \left(S = S' \text{ and } T = T'\right)
        \right]
        \\
        &\Leftrightarrow\left[
            \left(S \subseteq S' \text{ and } T \subseteq T'\right)
            \Rightarrow \left(S = S' \text{ and } T = T'\right)
        \right]
    \end{align*}
    For sequential composition to be well-defined, we need to show that $S \subseteq \task{A}^\dagger \circ T$ and $T \subseteq \task{B}^\dagger \circ U$ imply $S \subseteq (\task{B}\circ \task{A})^\dagger \circ U$:
    \[\tikzfig{seqcompcheck}\]
    For parallel composition to be well-defined, we need to show that $S \subseteq \task{A}^\dagger \circ U$ and $T \subseteq \task{B}^\dagger \circ V$ imply $S \times T \subseteq (\task{A}\boxtimes \task{B})^\dagger \circ (U \times V)$:
    \[\tikzfig{parcompcheck}\]
    The remaining checks are all straightforward, on similar lines.
\end{proof}

\begin{remark}
    Any sub-SMC $\mathcal{C}$ of $\Rel$ embeds into the associated theory of coarse-grained tasks $\overline{\mathcal{C}}$.
    The embedding is the functor---faithful and injective on objects---defined by sending each set to the set of its singleton subsets:
    \begin{align*}
        F(X) &:= \suchthat{\{x\}}{x \in X}\\
        F\left(\task{A}: X \rightarrow Y\right) &:= \quotask{\task{A}}{\bar{X}}{\bar{Y}} = \suchthat{
            \{x\} \mapsto \{y\}
        }{
            x \tmapsto{\task{A}} y
        }
    \end{align*}
    It is straightforward to check that the mapping defined above is a strict monoidal functor, i.e. that it preserves both sequential and parallel composition exactly (as well as identities, and symmetry isomorphisms, in this case).
\end{remark}

\begin{remark}
    Let $\mathcal{C}$ be a sub-SMC of $\Rel$.
    The associated theory of coarse-grained tasks $\overline{\mathcal{C}}$ embeds back into $\Rel$, via the identity functor:
    \[
        F(\bar{X}) := \bar{X}
        \hspace{2cm}
        F\left(\quotask{\task{A}}{\bar{X}}{\bar{Y}}\right) := \quotask{\task{A}}{\bar{X}}{\bar{Y}}
    \]
    The functor is strict monoidal when restricted to (the embedding of) $\mathcal{C}$ (into $\overline{\mathcal{C}}$).
    It is not strict (or strong) monoidal in general, because the tensor product on sets of attributes is not the same as the tensor product on sets of states:
    \[
        \bar{X} \boxtimes \bar{Y} := \suchthat{S \times T}{S \in \bar{X},\, T \in \bar{Y}}
        \neq \suchthat{(S, T)}{S \in \bar{X},\, T \in \bar{Y}} = \bar{X} \times \bar{Y}
    \]
    It is, however, lax monoidal, with the following structure morphisms:
    \[
        \left[(S, T) \mapsto S \times T\right]: \bar{X} \times \bar{Y} \rightarrow \bar{X} \boxtimes \bar{Y}
        \hspace{2cm}
        \left[\{*\} \mapsto *\right]: \bar{1} \rightarrow 1
    \]
    To see this, it suffices to observe that not only do $S \subseteq \task{A}^\dagger \circ U$ and $T \subseteq \task{B}^\dagger \circ V$ imply $S \times T \subseteq (\task{A}\boxtimes \task{B})^\dagger \circ (U \times V)$, but also $S \times T \subseteq (\task{A}\boxtimes \task{B})^\dagger \circ (U \times V)$ implies both $S \subseteq \task{A}^\dagger \circ U$ and $T \subseteq \task{B}^\dagger \circ V$.
\end{remark}





\section{Conclusion and historical remarks}

We have given categorical semantics for constructor theory in its most general form, interpreting to the best of our ability the desired mathematical foundations both set out in Deutsch's original paper \cite{deutsch2013constructor} and expressed to us by current practitioners.
We remark, without further comment, that the diagrammatic syntax we have used to formally incarnate constructor theory is also interpretable in other symmetric monoidal categories.
A long form presentation of the same content with worked examples from the constructor theory literature is in preparation.

We close with two historical case studies intended to inform constructor theorists of the topically-relevant history of process theories as applied to quantum theory, and to encourage the pursuit of the possible-impossible dichotomy by illustrating some of the fruitful outcomes that may result.

\paragraph{Process theories arose from counterfactual reasoning.}

\emph{Possibility}, read as what \emph{could} happen, is at the heart of constructor theory: here, constructor theory and process theories share a lineage of counterfactual reasoning, tracing back to Aristotle's distinction between ``actual'' and ``potential''.
One ancestor of process theories along this lineage is the Geneva school of quantum logic \cite{Piron, Jauch}, which defined the properties of physical systems in terms of experiments that could be performed \cite{DJMoore}, resulting in the linearity of physical processes \cite{FMP} due to an adjunction between cause and consequence (cf.~weakest precondition semantics in computer science \cite{hoare1987weakest}).
This led to the development of a process-theoretic framework for quantum theory, which encoded the structural consequences of an adjunction between causes and consequences in terms of a quantaloid \cite{CMS}.
The underlying structure of spaces (= quantum logics) was induced at the level of processes, and efforts were made to cast the composition of systems in those terms through process-state duality \cite{coecke2000}.
However, the current success of process theories relies on dumping quantum logics and replacing them with specially chosen processes (cf.~cups and caps \cite{AC1}).
A process theory, when formulated as a concrete symmetric monoidal category, is about possible and impossible processes that obey the axioms of the corresponding category.
Reconstructions of quantum theory in terms of process theories turn these categorical axioms into physical postulates that are considered more reasonable by some \cite{HardyJTF, selby2018reconstructing}.

%
%
%
%
%

\paragraph{Quantum from no-cloning.}

In constructor theory, the cut between possible and impossible tasks is used to define theories, and it has been suggested that the impossibility to clone should yield quantum theory, at least in a broad sense.
In categorical quantum mechanics \cite{AC1, coecke_picturing_2017, coecke_quantum_2023}, dating back to at least 2006, classicality was indeed defined by the ability to clone \cite{CPav}: this has resulted in the development of spiders \cite{CPaqPav} and the ZX-calculus \cite{CD2}, now a prominent formalism in quantum foundations, quantum computation, and general education on quantum theory.

%

\section*{Acknowledgements}

With many thanks to \textbf{Maria Violaris} and \textbf{Anicet Tibau Vidal} for their clarity and patience when presenting constructor theory at the Wolfson quantum foundations discussion, and to \textbf{Nicola Pinzani} for his insights in conversation with one of the authors.

\bibliographystyle{eptcs}
\bibliography{bibliography}
\nocite{*}

\end{document}